\numberwithin{equation}{section}
\theoremstyle{plain}
\newtheorem{thm}{Theorem}[section]
\newtheorem{prop}[thm]{Proposition}
\newtheorem{lem}[thm]{Lemma}
\theoremstyle{definition}
\newtheorem{defn}[thm]{Definition}
\newtheorem{exmp}[thm]{Example}
\newtheorem{rem}[thm]{Remark}
\newtheorem{f}[thm]{ }
\begin{document}
\bibliographystyle{amsplain}

\title[Complexes of $C$-projective modules]{Complexes of $C$-projective modules}

\bibliographystyle{amsplain}

\author[E. Amanzadeh]{Ensiyeh Amanzadeh $^*$}
\address[Ensiyeh Amanzadeh]{Faculty of Mathematical Sciences and Computer,
Kharazmi University,  Tehran,  Iran. }
  \email{en.amanzadeh@gmail.com}

\author[M. T. Dibaei]{Mohammad T. Dibaei}
\address[Mohammad Taghi Dibaei]{Faculty of Mathematical Sciences and Computer,
Kharazmi University,  Tehran,  Iran; and School of Mathematics,
Institute for Research in Fundamental Sciences (IPM),  P.O. Box:
19395-5746,  Tehran,  Iran.}
  \email{dibaeimt@ipm.ir}

\thanks{$^*$Corresponding author}


\begin{abstract}
 Inspired by a recent work of Buchweitz and Flenner, we show that, for a semidualizing bimodule $C$,  $C$--perfect complexes
 have the ability to detect when a ring is strongly regular.
It is shown that there exists a class of modules which admit minimal resolutions of $C$--projective modules.\\
\textbf{Keywords:}  Semidualizing,  $C$--projective, $\mathcal P_C$--resolution, $C$--perfect complex, strongly regular.  \\
\textbf{MSC(2010):}  Primary: 13D05; Secondary: 16E05, 16E10.
\end{abstract}


\maketitle


\section{Introduction} \label{sec1}
Let $R$ be a left and right noetherian ring (not necessarily
commutative), all modules left $R$--modules and $C$ a semidualizing
$(R, R)$--bimodule (Definition~\ref{d1}). A complex $X_\bullet$ of
$R$--modules is said to be $C$--{\it perfect} if it is
quasiisomorphic to a finite complex
$$T_\bullet= 0\longrightarrow C\otimes_R P_n\longrightarrow C\otimes_R P_{n-1}\longrightarrow\cdots\longrightarrow
C\otimes_R P_1\longrightarrow C\otimes_R P_0\longrightarrow 0,$$
where each $P_i$ is a finite (i.e. finitely generated) projective
$R$--module. The {\it width} of such a $C$--perfect complex
$X_\bullet$, denoted by $\mathrm{wd}(X_\bullet)$, is defined to be
the minimal length $n$ of a complex $T_\bullet$ satisfying the above
conditions. Recall from \cite{bf}, a ring $R$ is called {\it
strongly regular} whenever there exists a non-negative integer $r$
such that every $R$--perfect complex is quasiisomorphic to a direct
sum of $R$--perfect complexes of width $\leqslant r$. Buchweitz and
Flenner, in \cite{bf}, characterize the commutative noetherian rings
which are strongly regular.

Our first objective is to detect when a ring is strongly
regular by means of $C$--perfect complexes (Theorem~\ref{J}).
We also prove that $C$--projective modules (i.e. modules of the form
$C\otimes_R P$ with $P$ projective) have the ability to detect when
a ring is hereditary (Proposition \ref{H}).

Our second goal is to find a class of $R$--modules which admit minimal resolutions of
$C$--projective modules (see Theorem \ref{E}).

\section{Preliminaries} \label{sec2}

Throughout, $R$ is a left and right noetherian ring (not necessarily
commutative) and let all $R$--modules be left $R$--modules. Right
$R$--modules are identified with left modules over the opposite ring
$R^\mathrm{op}$. An $(R, R)$--{\it bimodule} $M$ is both left and
right $R$--module with compatible structures.

\begin{defn}\label{d1} \cite[Definition 2.1]{hw}
An $(R, R)$--bimodule $C$ is {\it semidualizing} if it is a finite $R$--module, finite $R^{\mathrm{op}}$--module,
and the following conditions hold.

(1) The homothety map $R\stackrel{^R\gamma}{\longrightarrow}\mathrm{ Hom}_{R^\mathrm{ op}}(C, C)$ is an isomorphism.

(2) The homothety map $R\stackrel{\gamma^R}{\longrightarrow}\mathrm{ Hom}_R(C, C)$ is an isomorphism.

(3) $\mathrm{ Ext}^{\geqslant 1}_R(C, C)=0$.

(4) $\mathrm{ Ext}^{\geqslant 1}_{R^\mathrm{ op}}(C, C)=0$.
\end{defn}
Assume that $R$ is a commutative noetherian ring, then the above definition agrees with the definition of
semidualizing $R$--module (see e.g. \cite[2.1]{hw}).
Also, every finite projective $R$--module of rank 1 is semidualizing (see \cite[Corollary 2.2.5]{s-w}).

\begin{defn} \cite[Definition 3.1]{hw}
A semidualizing $(R, R)$--bimodule $C$ is said to be {\it faithfully semidualizing}
if it satisfies the following conditions

(a) If $\mathrm{Hom}_R(C, M)=0$, then $M=0$ for any $R$--module $M$;

(b) If $\mathrm{Hom}_{R^\mathrm{ op}}(C, N)=0$, then $N=0$ for any $R^\mathrm{ op}$--module $N$.
\end{defn}

Note that over a commutative noetherian ring, all semidualizing modules are faithfully semidualizing,
by \cite[Proposition 3.1]{hw}.

For the remainder of this section $C$ denotes a semidualizing $(R, R)$--bimodule.
The following class of modules is already appeared in, for example, \cite{hj}, \cite{hw}, and \cite{tw}.
\begin{defn}
An $R$--module is called $C$--{\it projective} if it has the form $C\otimes_R P$
for some projective $R$--module $P$. The class of (resp. finite) $C$--projective modules is denoted
by ${\mathcal P}_C$ (resp. ${\mathcal P}^f_C$).
\end{defn}

\begin{f}
A complex $A$ of $R$--modules is called $\mathrm{ Hom}_R(\mathcal{P}_C, -)$--exact
if $\mathrm{ Hom}_R(C\otimes_R P, A)$ is exact for each projective $R$--module $P$.
The term $\mathrm{ Hom}_R(-, \mathcal{P}_C)$--exact is defined dually.
\end{f}

For the notations in the next fact one may see \cite[Definitions 1.4 and 1.5]{wsw1}
\begin{f}\label{d2}
A ${\mathcal P}_C$--{\it resolution} of an $R$--module $M$ is a complex $X$ in ${\mathcal P}_C$
with $X_{-n}=0=\mathrm{H}_n(X)$ for all $n>0$ and $M\cong \mathrm{H}_0(X)$.
The following exact sequence is the {\it augmented} ${\mathcal P}_C$--{\it resolution} of $M$
associated to $X$:
$$X^+=\cdots \stackrel{\partial^X_2}{\longrightarrow} C\otimes_R P_1 \stackrel{\partial^X_1}
{\longrightarrow}C\otimes_R P_0 \longrightarrow M\longrightarrow 0.$$
A ${\mathcal P}_C$--resolution $X$ of $M$ is called $proper$ if in addition $X^+$ is
$\mathrm{Hom}_R({\mathcal P}_C, -)$--exact.

The ${\mathcal P}_C$--{\it projective dimension} of $M$ is the quantity

$\hspace{1.2cm}$ ${\mathcal P}_C$--$\mathrm{pd}(M)=\inf \{\sup \{ n\geqslant0\ |\ X_n\neq0\}\ |\ X$
is an ${\mathcal P}_C$--resolution of $M\}$.

The objects of ${\mathcal P}_C$--projective dimension 0 are exactly $C$--projective $R$--modules.

The notion ({\it proper}) ${\mathcal P}_C$--{\it coresolution} is defined dually.
The {\it augmented} ${\mathcal P}_C$--{\it coresolution} associated to a ${\mathcal P}_C$--coresolution $Y$ is denoted by $^+Y$.
\end{f}

In \cite{tw}, the authors proved the following proposition for a commutative ring $R$. However, by an easy inspection, one can see that it is true even if $R$ is non-commutative.
\begin{prop}\label{p1}
Assume that $C$ is a faithfully semidualizing $(R, R)$--bimodule and that $M$ is an $R$--module.
The following statements hold true.
\begin{itemize}
\item[(a)]\cite[Corollary 2.10(a)]{tw}  The inequality ${\mathcal P}_C$--$\mathrm{pd}(M)\leqslant n$ holds if and only if there is a complex
$$ 0\longrightarrow C\otimes_R P_n\longrightarrow \cdots \longrightarrow C\otimes_R P_0 \longrightarrow M\longrightarrow 0$$
which is $\mathrm{ Hom}_R({\mathcal P}_C,  -)$--exact.
\item[(b)]\cite[Theorem 2.11(a)]{tw}  $\mathrm{ pd}_R(M)=\mathcal{P}_C$--$\mathrm{ pd}_R(C\otimes_R M)$.
\item[(c)] \cite[Theorem 2.11(c)]{tw} $\mathcal{P}_C$--$\mathrm{ pd}_R(M)=\mathrm{ pd}_R(\mathrm{ Hom}_R(C, M))$.
\end{itemize}
\end{prop}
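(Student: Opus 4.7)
My plan is to verify that each of the three proofs in \cite{tw} uses only formal properties that are symmetric in the two sides of the bimodule structure on $C$—namely, the adjunction $\mathrm{Hom}_R(C\otimes_R X, Y)\cong \mathrm{Hom}_R(X, \mathrm{Hom}_R(C,Y))$ for left $R$-modules, the two homothety isomorphisms $R\cong \mathrm{Hom}_R(C,C)\cong \mathrm{Hom}_{R^{\mathrm{op}}}(C,C)$, the vanishings $\mathrm{Ext}^{\geqslant 1}_R(C,C)=\mathrm{Ext}^{\geqslant 1}_{R^{\mathrm{op}}}(C,C)=0$, and the faithfulness hypothesis. Since commutativity of $R$ enters nowhere in the formal manipulations of \cite{tw}, the proofs carry over verbatim; what follows is an outline of why.

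For part (a), the ``if'' direction is immediate: a $\mathrm{Hom}_R(\mathcal{P}_C,-)$-exact augmented complex of $C$-projectives is in fact exact, because applying $\mathrm{Hom}_R(C,-)$ (the case $P=R$ in $C\otimes_R P$) yields an exact sequence, and faithfulness of $C$ upgrades this to exactness of the original complex. For the ``only if'' direction, given an arbitrary $\mathcal{P}_C$-resolution of length $\leqslant n$, one manufactures a proper one of the same length by a standard construction: for any $R$-module $N$, the evaluation map $C\otimes_R \mathrm{Hom}_R(C,N)\to N$, composed with a surjection from a free $R$-module onto $\mathrm{Hom}_R(C,N)$, yields an epimorphism $C\otimes_R P\twoheadrightarrow N$ that remains surjective after applying $\mathrm{Hom}_R(C\otimes_R Q,-)$ for every projective $Q$. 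Iterating this construction produces a proper $\mathcal{P}_C$-resolution, and a Schanuel-type comparison argument preserves the length bound $n$.

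For (b) and (c), the central observation is that the pair $(C\otimes_R -,\,\mathrm{Hom}_R(C,-))$ restricts to quasi-inverse equivalences between the category of projective $R$-modules and the category $\mathcal{P}_C$, since $\mathrm{Hom}_R(C,\,C\otimes_R P)\cong P$ and $C\otimes_R\mathrm{Hom}_R(C,\,C\otimes_R P)\cong C\otimes_R P$ for projective $P$. Consequently, any projective resolution of $M$ transports under $C\otimes_R -$ to a proper $\mathcal{P}_C$-resolution of $C\otimes_R M$ of the same length, proving $\geqslant$ in (b); the reverse inequality and all of (c) follow by applying $\mathrm{Hom}_R(C,-)$ symmetrically and invoking part (a) to recognise the output as a genuine resolution. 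The main obstacle is verifying that these functors preserve exactness of the relevant resolutions, which reduces to the vanishings $\mathrm{Tor}^R_{\geqslant 1}(C,P)=0$ for $P$ projective and $\mathrm{Ext}^{\geqslant 1}_R(C,\,C\otimes_R P)=0$; the former is trivial, while the latter is a standard dimension-shift from $\mathrm{Ext}^{\geqslant 1}_R(C,C)=0$, and neither step is sensitive to non-commutativity of $R$.
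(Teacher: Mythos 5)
Your proposal is correct and takes essentially the same route as the paper, which offers no argument of its own: it simply cites Takahashi--White \cite{tw} (Corollary 2.10 and Theorem 2.11) and remarks that ``by an easy inspection'' those proofs remain valid when $R$ is non-commutative and $C$ is faithfully semidualizing. Your outline --- that the \cite{tw} arguments use only the adjunction $\mathrm{Hom}_R(C\otimes_R X,Y)\cong\mathrm{Hom}_R(X,\mathrm{Hom}_R(C,Y))$, the homothety isomorphisms, the $\mathrm{Ext}$-vanishing, the equivalence $(C\otimes_R-,\mathrm{Hom}_R(C,-))$ between projectives and $\mathcal{P}_C$, and faithfulness to upgrade $\mathrm{Hom}_R(C,-)$-exactness to genuine exactness --- is precisely the inspection the paper has in mind, so the two approaches coincide.
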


\begin{rem}\label{f1}
By \cite[Proposition 5.3]{hw} the class ${\mathcal P}_C$ is
precovering, that is,  for an $R$--module $M$, there exists a
projective $R$--module $P$ and a homomorphism $\phi:
C\otimes_RP\rightarrow M$ such that, for every projective $Q$, the
induced map
$$\mathrm{ Hom}_R(C\otimes_RQ, C\otimes_RP)\stackrel{\mathrm{ Hom}_R(C\otimes_RQ, \phi)}{-\hspace{-0.2cm}-\hspace{-0.2cm}-\hspace{-0.2cm}-\hspace{-0.2cm}-\hspace{-0.2cm}-\hspace{-0.2cm}\longrightarrow}
\mathrm{ Hom}_R(C\otimes_RQ, M)$$
is surjective. Then one can iteratively take precovers to construct a complex
\begin{equation}\tag{\ref{f1}.1}{W = \cdots\stackrel{{\partial}_2^{X}}{\longrightarrow }
C\otimes_R P_1\stackrel{{\partial}_1^{X}}{\longrightarrow }C\otimes_R P_0\longrightarrow 0}
\end{equation}
such that $W^+$ is $\mathrm{ Hom}_R({\mathcal P}_C,  -)$--exact, where
$$W^+ = \cdots\stackrel{{\partial}_2^{X}}{\longrightarrow }C\otimes_R P_1\stackrel{{\partial}_1^{X}}{\longrightarrow }
C\otimes_R P_0 \stackrel{\phi}{\longrightarrow } M\longrightarrow 0.$$
For the notions precovering, covering, preenveloping and enveloping one can see \cite{ej}.
\end{rem}

Note that if $C$ is faithfully semidualizing $(R, R)$--bimodule and $M$ is an $R$--module,
then, by Proposition \ref{p1}(a), ${\mathcal P}_C$--$\mathrm{pd}(M)$ is equal to the length of
the shortest complex as (\ref{f1}.1).
Thus for any $R$--module $M$, the quantity ${\mathcal P}_C$--projective dimension of $M$,
defined in \cite{hw} and \cite{tw}, is equal to ${\mathcal P}_C$--$\mathrm{pd}(M)$ in \ref{d2}.


\section{Results} \label{sec3}

A ring $R$ is (left) hereditary if every left ideal is projective. The Cartan-Eilenberg theorem \cite[Theorem 4.19]{r}
shows that $R$ is hereditary if and only if every submodule of a projective module is projective.
We show that the quality of being hereditary can be detected by $C$--projective modules, which is interesting on its own.

\begin{prop}\label{H}
Assume that $C$ runs trough the class of faithfully semidualizing $(R, R)$--bimodules.
The following statements are equivalent.
\begin{itemize}
\item[(i)] $R$ is left hereditary.
\item[(ii)] For any $C$, every submodule of a $C$--projective $R$--module is also $C$--projective.
\item[(iii)] There exists a $C$ such that every submodule of a $C$--projective $R$--module is also $C$--projective.
\end{itemize}
\end{prop}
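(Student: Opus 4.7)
The plan is to run the cycle (i)$\Rightarrow$(ii)$\Rightarrow$(iii)$\Rightarrow$(i), converting between ordinary and $\mathcal{P}_C$--projective dimension by means of Proposition~\ref{p1}. The step (ii)$\Rightarrow$(iii) is immediate, since $R$ itself is a faithfully semidualizing $(R,R)$--bimodule and $\mathcal{P}_R$ coincides with the class of projective $R$--modules.

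For (i)$\Rightarrow$(ii), I would fix a faithfully semidualizing $C$, a projective $P$, and a submodule $N\subseteq C\otimes_R P$, and apply $\mathrm{Hom}_R(C,-)$ to the inclusion to get an injection $\mathrm{Hom}_R(C,N)\hookrightarrow \mathrm{Hom}_R(C,C\otimes_R P)$. Since $C$ is finitely presented (being finite over a noetherian ring), the homothety isomorphism $R\cong\mathrm{Hom}_R(C,C)$ extends summand--wise to $\mathrm{Hom}_R(C,C\otimes_R P)\cong P$. Thus $\mathrm{Hom}_R(C,N)$ embeds in a projective module, so hereditariness of $R$ forces it to be projective. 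Proposition~\ref{p1}(c) then yields $\mathcal{P}_C\text{-}\mathrm{pd}(N)=\mathrm{pd}_R(\mathrm{Hom}_R(C,N))=0$, i.e., $N\in\mathcal{P}_C$.

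For (iii)$\Rightarrow$(i), it suffices to show every $R$--module has projective dimension at most $1$. Given such an $M$, I would pick a free surjection $F\twoheadrightarrow M$ and tensor with $C$ to get
$$0\longrightarrow K\longrightarrow C\otimes_R F\longrightarrow C\otimes_R M\longrightarrow 0,$$
where (iii) forces $K\in\mathcal{P}_C$. The delicate point is to verify this sequence is $\mathrm{Hom}_R(\mathcal{P}_C,-)$--exact so that Proposition~\ref{p1}(a) applies. Writing $K\cong C\otimes_R P'$, we have that $K$ is a direct summand of some $C^{(I)}$, and since $C$ is finitely presented we get $\mathrm{Ext}^1_R(C,C^{(I)})\cong \mathrm{Ext}^1_R(C,C)^{(I)}=0$, whence $\mathrm{Ext}^1_R(C,K)=0$. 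Therefore $\mathrm{Hom}_R(C,-)$ keeps the sequence short exact, and the tensor--hom adjunction
$$\mathrm{Hom}_R(C\otimes_R Q,-)\cong \mathrm{Hom}_R(Q,\mathrm{Hom}_R(C,-))$$
together with the projectivity of $Q$ propagates surjectivity to $\mathrm{Hom}_R(C\otimes_R Q,-)$ for every projective $Q$. Proposition~\ref{p1}(a) then gives $\mathcal{P}_C\text{-}\mathrm{pd}(C\otimes_R M)\leqslant 1$, and Proposition~\ref{p1}(b) converts this into $\mathrm{pd}_R(M)\leqslant 1$. Since $M$ was arbitrary, $R$ is left hereditary.

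The main obstacle is the $\mathrm{Hom}_R(\mathcal{P}_C,-)$--exactness check in the last step; once the vanishing $\mathrm{Ext}^1_R(C,K)=0$ is secured, the remaining manipulations are direct translations through Proposition~\ref{p1}.
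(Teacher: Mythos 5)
Your proof is correct, and for (i)$\Rightarrow$(ii) and (ii)$\Rightarrow$(iii) it coincides with the paper's argument: apply $\mathrm{Hom}_R(C,-)$ to the inclusion $N\subseteq C\otimes_R P$, identify $\mathrm{Hom}_R(C,C\otimes_R P)\cong P$, invoke hereditariness, and convert back with Proposition~\ref{p1}(c). The only genuine divergence is in (iii)$\Rightarrow$(i). The paper asserts in one line that $\mathcal{P}_C$--$\mathrm{pd}(M)\leqslant 1$ for \emph{every} module $M$, which implicitly rests on the precover construction of Remark~\ref{f1}: the kernel of a $\mathcal{P}_C$--precover of $M$ is a submodule of a $C$--projective module, hence $C$--projective by (iii), and the augmented two-term complex is $\mathrm{Hom}_R(\mathcal{P}_C,-)$--exact by the precover property, so Proposition~\ref{p1}(a) applies; specializing to $M=C\otimes_R N$ and using Proposition~\ref{p1}(b) finishes. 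You instead bound only $\mathcal{P}_C$--$\mathrm{pd}(C\otimes_R N)$, building the two-term complex from a free presentation $F\twoheadrightarrow N$ and checking $\mathrm{Hom}_R(\mathcal{P}_C,-)$--exactness by hand: $\mathrm{Ext}^1_R(C,K)=0$ because $K$ is a direct summand of a coproduct of copies of $C$ and $\mathrm{Ext}^1_R(C,-)$ commutes with coproducts ($C$ being finite over a noetherian ring), after which adjunction and projectivity of $Q$ do the rest. Your route avoids invoking the precovering property of $\mathcal{P}_C$ altogether and supplies explicitly the exactness verification the paper leaves implicit; the paper's route is more uniform (it bounds $\mathcal{P}_C$--$\mathrm{pd}$ of all modules at once) and more economical, since the needed machinery is already set up in Remark~\ref{f1}. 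Both arguments then conclude identically: $\mathrm{pd}_R(N)\leqslant 1$ for all $N$, so every submodule of a projective module is projective and $R$ is left hereditary.
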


\begin{proof}
(i)$\Rightarrow$(ii). Let $C$ be a faithfully semidualizing bimodule and $N$ a submodule of $C\otimes_R P$, where $P$ is a projective $R$--module.
Then one gets the exact sequence $0 \longrightarrow \mathrm{ Hom}_R(C, N) \longrightarrow P$.
As $R$ is left hereditary, $\mathrm{ Hom}_R(C, N)$ is a projective $R$--module. By Proposition~\ref{p1}(c), $\mathcal{P}_C$--$\mathrm{pd}(N)=\mathrm{pd}(\mathrm{ Hom}_R(C, N))=0$.

(ii)$\Rightarrow$(iii) is immediate.

(iii)$\Rightarrow$(i). As every submodule of a $C$--projective $R$--module is $C$--projective, for any $R$--module $M$
one has $\mathcal{P}_C$--$\mathrm{pd}(M)\leqslant 1$.
Then for any $R$--module $N$ one gets $\mathrm{pd}(N)=\mathcal{P}_C$--$\mathrm{pd}(C\otimes_R N)\leqslant 1$,
by Proposition~\ref{p1}(b). It follows that every submodule of a projective is projective and so,
by \cite[Theorem 4.19]{r}, $R$ is left hereditary.
\end{proof}

\begin{defn}\label{d5}
A complex $X_\bullet$ of $R$--modules is called $C$--{\it perfect} if it is quasiisomorphic to a finite complex
$$T_\bullet= 0 \longrightarrow C\otimes_R P_n\longrightarrow C\otimes_R P_{n-1}\longrightarrow\cdots
\longrightarrow C\otimes_R P_1\longrightarrow C\otimes_R
P_0\longrightarrow 0,$$ where $P_i$ are finite projective
$R$--modules. The {\it width} of such a $C$--perfect complex
$X_\bullet$, denoted by $\mathrm{wd}(X_\bullet)$, is defined to be
the minimal length $n$ of a complex $T_\bullet$ satisfying the above
conditions. A $C$--perfect complex $X_\bullet$ is called {\it indecomposable} if
it is not quasiisomorphic to a direct sum of two non-trivial
$C$--perfect complexes.
\end{defn}

\begin{defn}\label{d6}\cite[Definition 1.1]{bf}
A ring $R$ is called {\it strongly} $r$--{\it regular} if every
perfect complex over $R$ is quasiisomorphic to a direct sum of
perfect complexes of width $\leqslant r$. If $R$ is strongly
$r$--regular for some $r$ then it will be called {\it strongly
regular}.
\end{defn}
\begin{rem}
As Professor Ragnar-Olaf Buchweitz kindly pointed out in his
personal communication with the authors, in \cite{bf} it should be
added the blanket statement that rings are noetherian and modules
are finite. Thus Definition~\ref{d6} agrees with \cite[Definition 1.1]{bf}.
Indeed, over a noetherian ring every perfect complex has bounded and finite homology.
\end{rem}

Note that a hereditary ring $R$ is strongly 1-regular, see \cite[Remark 1.2]{bf}.

In order to bring the results Theorem \ref{J} and Proposition
\ref{p2}, we quote some preliminaries.

\begin{defn}\cite[III.3.2(b)]{gm} and \cite[Definition 2.2.8]{cf}
Let $\alpha: A\rightarrow B$ be a morphism of $R$--complexes. The {\it mapping cone} of $\alpha$,
$\mathrm{Cone}(\alpha)$, is a complex which is given by
$$(\mathrm{Cone}(\alpha))_n=B_n\oplus A_{n-1} \ \ \ \mathrm{and}\ \ \ \ \ \partial_n^{\mathrm{Cone}(\alpha)}=
\left(\begin{array}{cccc}\partial_n^B & \alpha_{n-1}\\ 0 &
-\partial_{n-1}^A\\ \end{array}\right).$$
\end{defn}

It easy to see that the following lemma is also true if $R$ is
non-commutative.
\begin{lem}\label{L10}
Let $\alpha: A\rightarrow B$ be a morphism of $R$--complexes and $M$ be an $R$--module.
The following statements hold true.
\begin{itemize}
\item[(a)] \cite[Lemma 2.2.10]{cf} The morphism $\alpha$ is a quasiisomorphism if and only if $\mathrm{Cone}(\alpha)$ is acyclic.
\item[(b)] \cite[Lemma 2.3.11]{cf} $\mathrm{Cone}(\mathrm{Hom}_R(M, \alpha))\cong \mathrm{Hom}_R(M, \mathrm{Cone}(\alpha))$.
\item[(c)] \cite[Lemma 2.4.11]{cf} $\mathrm{Cone}(M\otimes_R \alpha)\cong M\otimes_R \mathrm{Cone}(\alpha)$.
\end{itemize}
\end{lem}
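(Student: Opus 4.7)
The plan is straightforward: inspect the proofs of \cite[Lemma 2.2.10]{cf}, \cite[Lemma 2.3.11]{cf}, and \cite[Lemma 2.4.11]{cf} and check that commutativity of $R$ is never used. Each statement depends only on the abelian-category structure of left $R$--modules, on the functoriality of $\mathrm{Hom}_R(M, -)$ and $M \otimes_R -$, and on their additivity over finite direct sums, so the commutative-ring arguments should translate verbatim.

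For (a), I would exhibit the short exact sequence of $R$--complexes
\[ 0 \longrightarrow B \longrightarrow \mathrm{Cone}(\alpha) \longrightarrow \Sigma A \longrightarrow 0 \]
arising from the term-wise splitting $\mathrm{Cone}(\alpha)_n = B_n \oplus A_{n-1}$, where $\Sigma A$ is the shift $(\Sigma A)_n = A_{n-1}$ with differential $-\partial^A$. A direct diagram chase using the $2 \times 2$ block form of $\partial^{\mathrm{Cone}(\alpha)}$ identifies, up to sign, the connecting homomorphism $\mathrm{H}_n(\Sigma A) \to \mathrm{H}_{n-1}(B)$ in the associated long exact sequence with $\mathrm{H}_{n-1}(\alpha)$. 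The equivalence then follows at once: every $\mathrm{H}_n(\alpha)$ is an isomorphism if and only if every homology group of $\mathrm{Cone}(\alpha)$ vanishes.

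For (b) and (c), the candidate isomorphism is the natural term-wise identification
\[ \mathrm{Hom}_R(M, B_n \oplus A_{n-1}) \cong \mathrm{Hom}_R(M, B_n) \oplus \mathrm{Hom}_R(M, A_{n-1}), \]
and analogously with $M \otimes_R -$ in place of $\mathrm{Hom}_R(M, -)$. The only content is then to verify that under these identifications, applying the functor to the block matrix
\[ \begin{pmatrix} \partial_n^B & \alpha_{n-1} \\ 0 & -\partial_{n-1}^A \end{pmatrix} \]
produces exactly the block matrix defining $\partial^{\mathrm{Cone}(\mathrm{Hom}_R(M, \alpha))}$ (respectively $\partial^{\mathrm{Cone}(M \otimes_R \alpha)}$). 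This is a purely formal computation.

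I do not anticipate any genuine obstacle; the only subtlety worth tracking is the sign convention on the shift and on the $(2,2)$-entry of the cone differential. The upshot is that the result really is a matter of confirming that the existing commutative-ring arguments did not secretly exploit commutativity, which an inspection of \cite{cf} shows they did not.
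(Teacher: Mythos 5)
Your proposal is correct and matches the paper's treatment: the paper gives no argument beyond citing \cite[Lemmas 2.2.10, 2.3.11, 2.4.11]{cf} and remarking that the statements remain valid over a non-commutative ring, which is exactly what you verify. Your sketches (the short exact sequence $0\to B\to \mathrm{Cone}(\alpha)\to \Sigma A\to 0$ with connecting map $\mathrm{H}_{n-1}(\alpha)$ for (a), and additivity of $\mathrm{Hom}_R(M,-)$ and $M\otimes_R-$ applied to the block differential for (b) and (c)) are the standard arguments underlying those citations, and indeed use no commutativity.
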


\begin{rem}\label{R10}
Let $C$ be a semidualizing $(R, R)$--bimodule. Assume that\\
$X=0\rightarrow X_n \rightarrow X_{n-1} \rightarrow \cdots
\rightarrow X_1 \rightarrow X_0 \rightarrow 0$ is an exact complex
of $R$--modules.
\begin{itemize}
\item[(a)] If each $X_i$ is a projective $R$--module, then it is easy to see that the induced complex $C\otimes_R X$ is exact.
\item[(b)] If each $X_i$ is a $C$--projective $R$--module, then the induced complex $\mathrm{Hom}_R(C, X)$ is exact,
since $\mathrm{Ext}_R^{\geqslant 1} (C, X_i)=0$.
\end{itemize}
\end{rem}
\begin{thm}\label{J}
The following statements are equivalent.
\begin{itemize}
\item[(i)] $R$ is strongly $r$--regular.
\item[(ii)] For any faithfully semidualizing bimodule $C$, every $C$--perfect complex is quasiisomorphic
to a direct sum of $C$--perfect complexes of width $\leqslant r$.
\item[(iii)] There exists a faithfully semidualizing bimodule $C$ such that every $C$--perfect complex
is quasiisomorphic to a direct sum of $C$--perfect complexes of width $\leqslant r$.
\end{itemize}
\end{thm}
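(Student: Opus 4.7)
The strategy is to transport the decomposition statement between perfect and $C$-perfect complexes via the pair of functors $C\otimes_R -$ and $\mathrm{Hom}_R(C,-)$, which are mutually inverse on the relevant classes. The implication (ii)$\Rightarrow$(iii) is immediate, since $R$ itself is a faithfully semidualizing bimodule.

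For (i)$\Rightarrow$(ii), fix a faithfully semidualizing $C$ and a $C$-perfect complex $X_\bullet$ with minimal representative $T_\bullet = 0 \to C\otimes_R P_n \to \cdots \to C\otimes_R P_0 \to 0$. I would first show that $Q_\bullet := \mathrm{Hom}_R(C, T_\bullet)$ is a perfect complex of the same length: the semidualizing identity $\mathrm{Hom}_R(C, C) \cong R$, combined with finiteness and additivity, gives $\mathrm{Hom}_R(C, C\otimes_R P) \cong P$ for any finite projective $P$ (this is the case $\mathcal{P}_C$-$\mathrm{pd} = 0$ of Proposition~\ref{p1}(c)). Applying the strongly $r$-regular hypothesis to $Q_\bullet$ yields a quasiisomorphism $Q_\bullet \simeq \bigoplus_{i \in I} S_i$ with each $S_i$ perfect of width $\leq r$. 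Tensoring with $C$ and using that $C\otimes_R -$ commutes with direct sums and preserves quasiisomorphism between bounded-below complexes of projectives, yields
\[
T_\bullet \;\cong\; C\otimes_R Q_\bullet \;\simeq\; \bigoplus_{i\in I} C\otimes_R S_i,
\]
where the first isomorphism is the degreewise evaluation $C\otimes_R \mathrm{Hom}_R(C, C\otimes_R P_j) \cong C\otimes_R P_j$. Each $C\otimes_R S_i$ is $C$-perfect of width $\leq r$, giving the desired decomposition.

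For (iii)$\Rightarrow$(i), the argument is entirely symmetric. Given a perfect $Y_\bullet$ with minimal representative $P_\bullet$ of finite projectives, the complex $C\otimes_R P_\bullet$ is $C$-perfect; by (iii) it decomposes as $\bigoplus_i T_i$ with each $T_i$ $C$-perfect of width $\leq r$. Applying $\mathrm{Hom}_R(C, -)$, which commutes with direct sums (as $C$ is finitely presented over the noetherian ring $R$) and preserves quasiisomorphism between bounded-below complexes of $C$-projectives, returns a decomposition of $P_\bullet \cong \mathrm{Hom}_R(C, C\otimes_R P_\bullet)$ into perfect complexes of width $\leq r$.

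The main obstacles are twofold: first, verifying that width is preserved under each functor, which follows from the mutual-inverse property on finite (co)projectives --- a shorter representative on one side would transport to a shorter representative on the other, contradicting minimality; second, ensuring that quasiisomorphisms (which may involve possibly unbounded direct sums) are honestly preserved when applying $C\otimes_R -$ or $\mathrm{Hom}_R(C, -)$, which is handled by the vanishing $\mathrm{Ext}^{\geq 1}_R(C, C) = 0$ together with Remark~\ref{R10} and Lemma~\ref{L10}(a)--(c), allowing computations to be carried out degreewise or via mapping cones without any higher-derived correction.
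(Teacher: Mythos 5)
Your proposal is correct and takes essentially the same route as the paper's proof: transport the problem through $\mathrm{Hom}_R(C,-)$ and $C\otimes_R-$, using $\mathrm{Hom}_R(C,C\otimes_R P)\cong P$ for finite projective $P$, and verify that the relevant quasiisomorphisms survive these functors via the mapping-cone argument of Lemma~\ref{L10} together with Remark~\ref{R10}. The differences (allowing a possibly infinite index set, the optional homotopy-equivalence justification, and the unnecessary remark on exact width preservation) are cosmetic and do not change the argument.
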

\begin{proof}
(i)$\Rightarrow$(ii). Let $R$ be strongly $r$--regular, $C$ a
faithfully semidualizing bimodule. Assume that $X_\bullet$ is a
$C$--perfect complex. Then, by Definition~\ref{d5}, there exists a
finite complex $$T_\bullet= 0 \longrightarrow C\otimes_R
P_n\longrightarrow C\otimes_R P_{n-1}\longrightarrow\cdots
\longrightarrow C\otimes_R P_0\longrightarrow 0,$$ such that each
$P_i$ is a finite projective $R$--module and $X_\bullet$ is
quasiisomorphic to $T_\bullet$. Therefore $\mathrm{Hom}_R(C,
T_\bullet)\cong 0 \longrightarrow P_n\longrightarrow
P_{n-1}\longrightarrow\cdots \longrightarrow P_0\longrightarrow 0$
is a perfect complex. By Definition~\ref{d6}, there is a
quasiisomorphism $\alpha:\mathrm{Hom}_R(C,
T_\bullet)\stackrel{\simeq}{\longrightarrow }\bigoplus_{i=1}^s
F_\bullet^{(i)}$, where each $F_\bullet^{(i)}$ is a perfect complex
of width $\leqslant r$. We may assume that each $F_\bullet^{(i)}$ is
a finite complex of finite projective $R$--modules. By
Lemma~\ref{L10}(a), $\mathrm{ Cone}(\alpha)$ is acyclic. As
$\mathrm{Cone}(\alpha)$ is a finite complex of projective
$R$--modules, Remark~\ref{R10} implies that the complex $C\otimes_R
\mathrm{ Cone}(\alpha)$ is acyclic. By Lemma~\ref{L10}, the complex
$\mathrm{ Cone} (C\otimes_R\alpha)$ is acyclic too and so
$C\otimes_R\alpha$ is quasiisomorphism. Therefore $T_\bullet$ is
quasiisomorphic to $\bigoplus_{i=1}^s C\otimes_R F_\bullet^{(i)}$.
Note that each $C\otimes_R F_\bullet^{(i)}$ is a $C$--perfect
complex of width $\leqslant r$.

(ii)$\Rightarrow$(iii) is immediate.

(iii)$\Rightarrow$(i). Let $Y_\bullet$ be a perfect complex. Then,
by Definition~\ref{d5}, there is a finite complex $F_\bullet= 0
\longrightarrow  P_m\longrightarrow  P_{m-1}\longrightarrow\cdots
\longrightarrow P_0\longrightarrow 0$ of finite projective modules
which is quasiisomorphic to $Y_\bullet$. As $C\otimes_R F_\bullet$
is a $C$--perfect complex, our assumption implies that there is a
quasiisomorphism $\beta:C\otimes_R
F_\bullet\stackrel{\simeq}{\longrightarrow }\bigoplus_{i=1}^s
T^{(i)}_\bullet$, where each $T^{(i)}_\bullet$ is a $C$--perfect
complex of width $\leqslant r$. We may assume that, for each $i$,
$$T^{(i)}_\bullet = 0 \longrightarrow C\otimes_R
P_{n_i}^{(i)}\longrightarrow\cdots \longrightarrow C\otimes_R
P_0^{(i)}\longrightarrow 0$$ where each $P_j^{(i)}$ is a finite
projective $R$--module. Similar to the proof of
(i)$\Rightarrow$(ii), one observes that $\mathrm{ Hom}_R(C, \beta)$
is a quasiisomorphism. Therefore $F_\bullet$ is quasiisomorphic to
$\bigoplus_{i=1}^s \mathrm{ Hom}_R(C, T^{(i)}_\bullet)$. Note that
each $\mathrm{ Hom}_R(C, T^{(i)}_\bullet)$ is a perfect complex of
width $\leqslant r$. Thus $R$ is strongly $r$--regular.
\end{proof}

In \cite[Section 1]{am}, Avramov and Martsinkovsky define a general notion of minimality
for complexes: A complex $X$ is {\it minimal} if every homotopy equivalence
$\sigma : X \longrightarrow X$ is an isomorphism.
In \cite[Lemma 4.8]{w}, it is proved that, over a commutative local ring $R$ with maximal ideal $\mathfrak{m}$,
a complex $X$ consisting of modules in $\mathcal P_C^f$ is minimal if and only if $\partial^X(X)\subseteq {\frak m}X$.

In consistent to \cite[Lemma 1.6]{bf} we prove the following
proposition.
\begin{prop}\label{p2}
Let $R$ be a commutative noetherian local ring, $C$ a semidualizing $R$--module. The following
statements hold true.

{\rm (a)} Every $C$--perfect complex $X_\bullet$ is quasiisomorphic to a minimal finite complex
$$T_\bullet= 0 \longrightarrow C\otimes_R F_n\longrightarrow C\otimes_R F_{n-1}\longrightarrow\cdots
\longrightarrow C\otimes_R F_1\longrightarrow C\otimes_R F_0\longrightarrow 0,$$
where each $F_i$ is finite free $R$--module.

{\rm (b)} If two minimal finite complexes of modules of the form $C^m=\oplus^m C$ are quasiisomorphic,
then they are isomorphic.
\end{prop}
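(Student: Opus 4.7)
The overall strategy is to transport both statements to corresponding classical statements about finite free complexes via the pair of functors $\mathrm{Hom}_R(C,-)$ and $C\otimes_R -$. On finite complexes whose terms have the form $C\otimes_R F$ with $F$ finite free (respectively, on finite complexes of finite free modules), these functors are mutually inverse up to natural isomorphism: the homothety isomorphism gives $\mathrm{Hom}_R(C, C\otimes_R F) \cong F$, and the evaluation map supplies $C\otimes_R \mathrm{Hom}_R(C, C\otimes_R F) \cong C\otimes_R F$. Moreover, Remark~\ref{R10} combined with Lemma~\ref{L10} shows that both functors preserve quasiisomorphisms between complexes of the respective forms.

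The technical heart of the argument is that this equivalence respects the minimality criterion $\partial^X(X)\subseteq \mathfrak{m}X$ from \cite[Lemma~4.8]{w}. This reduces to the identity $\mathrm{Hom}_R(C, \mathfrak{m}C) = \mathfrak{m}$, proved by applying $\mathrm{Hom}_R(C,-)$ to $0 \to \mathfrak{m}C \to C \to C/\mathfrak{m}C \to 0$ and using Nakayama on the nonzero $R/\mathfrak{m}$-vector space $C/\mathfrak{m}C$ to identify the kernel of the induced map $R \to \mathrm{Hom}_R(C, C/\mathfrak{m}C)$ with $\mathfrak{m}$.

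For (a), I would take a $C$-perfect representative $T_\bullet = 0 \to C\otimes_R F_n \to \cdots \to C\otimes_R F_0 \to 0$ with each $F_i$ finite free (possible since $R$ is local), form the finite free complex $G_\bullet := \mathrm{Hom}_R(C, T_\bullet)$, and minimize it by the classical procedure of iteratively splitting off trivial summands $0 \to R \to R \to 0$ corresponding to unit entries of the differentials, obtaining a minimal finite free complex $G'_\bullet$ homotopy equivalent to $G_\bullet$. Tensoring with $C$ yields the complex $C\otimes_R G'_\bullet$, which is of the required shape, is minimal by the correspondence above, and is quasiisomorphic to $C\otimes_R G_\bullet \cong T_\bullet \simeq X_\bullet$.

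For (b), I would apply $\mathrm{Hom}_R(C,-)$ to a quasiisomorphism between $T_\bullet$ and $T'_\bullet$: by Lemma~\ref{L10} together with Remark~\ref{R10}(b) (the cone has $C$-projective terms), the induced map $\mathrm{Hom}_R(C, T_\bullet) \to \mathrm{Hom}_R(C, T'_\bullet)$ is a quasiisomorphism between minimal finite free complexes over the local ring $R$. The classical uniqueness theorem, namely that a quasiisomorphism between bounded complexes of finite free $R$-modules lifts to a homotopy equivalence and any homotopy equivalence between minimal such complexes is an isomorphism, then yields an isomorphism. Applying $C\otimes_R -$ and invoking the evaluation isomorphism returns $T_\bullet \cong T'_\bullet$. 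The main obstacle throughout is the minimality correspondence, which is precisely where the semidualizing hypothesis on $C$ plays its essential role.
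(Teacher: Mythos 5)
Your proposal is correct and follows essentially the same route as the paper: transport both statements through the pair $\mathrm{Hom}_R(C,-)$ and $C\otimes_R-$, use the classical theory of minimal finite free complexes over the local ring, and return via the evaluation/homothety isomorphisms, with quasiisomorphisms preserved by the cone argument of Lemma~\ref{L10} and Remark~\ref{R10}. The only differences are cosmetic: you reprove the free-complex facts that the paper cites from \cite[Lemma 1.6]{bf}, and you make explicit the minimality transfer (via $\mathrm{Hom}_R(C,\mathfrak{m}C)=\mathfrak{m}$ and \cite[Lemma 4.8]{w}) that the paper leaves implicit.
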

\begin{proof}
{\rm (a).} By Definition~\ref{d5}, a $C$--perfect complex $X_\bullet$ is quasiisomorphic to a finite complex
$$T_\bullet= 0 \longrightarrow C\otimes_R P_n\longrightarrow C\otimes_R P_{n-1}\longrightarrow\cdots
\longrightarrow C\otimes_R P_1\longrightarrow C\otimes_R
P_0\longrightarrow 0,$$ where each $P_i$ is a finite free $R$--module.
The complex $\mathrm{Hom}_R(C, T_\bullet)$ is a perfect complex and
so, by \cite[Lemma 1.6(1)]{bf}, there exist a minimal finite complex
$F_\bullet$ of finite free $R$--modules and a quasiisomorphism
$\alpha: \mathrm{Hom}_R(C,
T_\bullet)\stackrel{\simeq}{\longrightarrow } F_\bullet$.
As in the proof of Theorem~\ref{J}, it follows that
$C\otimes_R\alpha: C\otimes_R\mathrm{Hom}_R(C, T_\bullet)\rightarrow C\otimes_R F_\bullet$
is a quasiisomorphism.
As $C\otimes_R F_\bullet$ is a minimal finite complex, we are done.

{\rm (b).} Let $T_\bullet$ and $L_\bullet$ be two minimal finite
complexes of modules of the form $C^m$. Assume that
$\alpha:T_\bullet\rightarrow L_\bullet$ is a quasiisomorphism. Then,
by Remark~\ref{R10} and Lemma~\ref{L10}, $\mathrm{ Hom}_R(C,
\alpha): \mathrm{ Hom}_R(C, T_\bullet)\rightarrow \mathrm{ Hom}_R(C,
L_\bullet)$ is a quasiisomorphism of minimal finite complexes of
finite free $R$--modules. Thus, by the proof of \cite[Lemma
1.6(2)]{bf}, $\mathrm{ Hom}_R(C, \alpha)$ is an isomorphism. Now,
there is a commutative diagram of complexes and morphisms
$$\begin{array}{ccc}
 T_\bullet & \stackrel{\simeq}{\underset{\alpha}{-\hspace{-0.2cm}-\hspace{-0.2cm}-\hspace{-0.2cm}-\hspace{-0.2cm}-\hspace{-0.2cm}
 -\hspace{-0.2cm}-\hspace{-0.2cm}-\hspace{-0.2cm}-\hspace{-0.2cm}-\hspace{-0.2cm}-\hspace{-0.2cm}-\hspace{-0.2cm}
 \longrightarrow}} & L_\bullet  \\
  \begin{array}{ll} \ \ {\Big\uparrow} \cong \end{array} &     &\begin{array}{ll} \ \ {\Big\uparrow} \cong \end{array}  \\
 C\otimes_R \mathrm{ Hom}_R(C, T_\bullet)
 & \stackrel{\cong}{\underset{C\otimes_R \mathrm{ Hom}_R(C, \alpha)}{-\hspace{-0.2cm}-\hspace{-0.2cm}-\hspace{-0.2cm}
 -\hspace{-0.2cm}-\hspace{-0.2cm}-\hspace{-0.2cm}-\hspace{-0.2cm}-\hspace{-0.2cm}-\hspace{-0.2cm}\longrightarrow}} &
  C\otimes_R \mathrm{ Hom}_R(C, L_\bullet), \\
\end{array}$$
where the vertical morphisms are natural isomorphisms. This implies
that $\alpha$ itself must be an isomorphism.
\end{proof}

It is proved in \cite[Lemma 4.9]{w} that every finite module $M$ over a commutative noetherian local ring $R$ with
$\mathcal P^f_C$--$\mathrm{ pd}(M)<\infty$ admits a minimal $\mathcal P_C^f$--resolution.
Now we show that every finite $R$--module which has a proper $\mathcal P_C$--resolution, admits a minimal proper one.
Note that if $\mathcal P^f_C$--$\mathrm{ pd}(M)<\infty$ then $M$ admits a proper $\mathcal P_C$--resolution
(see proof of \cite[Corollary 2.10]{tw}).

\begin{thm}\label{E}
Assume that $R$ is a commutative noetherian local ring and that $C$ is a semidualizing $R$--module. Then $\mathcal P_C^f$ is covering in the category of finite $R$--modules. For any finite $R$--module $M$, there is a complex
$X=\cdots \longrightarrow C^{n_1}\longrightarrow C^{n_0}\longrightarrow 0$ with the following properties.

$\mathrm{ (1)}$ $X^+=\cdots \longrightarrow C^{n_1}\longrightarrow C^{n_0}\longrightarrow M \longrightarrow 0$ is
$\mathrm{ Hom}_R(\mathcal P_C, -)$--exact.

$\mathrm{ (2)}$ $X$ is a minimal complex.

If $M$ admits a proper $\mathcal P_C$--resolution, then $X^+$ is exact and so $X$ is a minimal proper $\mathcal P_C$--resolution of $M$.
\end{thm}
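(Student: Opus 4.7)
The plan is to first establish the covering property of $\mathcal{P}_C^f$ by reducing to projective covers via the adjunction between $C\otimes_R-$ and $\mathrm{Hom}_R(C,-)$, then construct $X$ by iterating these covers, and finally compare $X$ with a hypothetical proper $\mathcal{P}_C$-resolution through a lifting argument.

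For the $\mathcal{P}_C^f$-cover of a finite $R$-module $N$, note that $\mathrm{Hom}_R(C,N)$ is also finite, so it admits a projective (equivalently, finite free) cover $\pi\colon R^n\to\mathrm{Hom}_R(C,N)$. Let $\phi\colon C^n\to N$ be the composite of $C\otimes_R\pi$ with the evaluation $C\otimes_R\mathrm{Hom}_R(C,N)\to N$. Using the adjunction $\mathrm{Hom}_R(C\otimes_R Q,-)\cong\mathrm{Hom}_R(Q,\mathrm{Hom}_R(C,-))$ together with $\mathrm{Hom}_R(C,C^n)\cong R^n$, the precover property of $\pi$ transfers to $\phi$; for the cover property, any $\sigma\colon C^n\to C^n$ with $\phi\sigma=\phi$ corresponds to $\mathrm{Hom}_R(C,\sigma)\colon R^n\to R^n$ satisfying $\pi\circ\mathrm{Hom}_R(C,\sigma)=\pi$, which is an automorphism by minimality of $\pi$; applying $C\otimes_R-$ and using the canonical isomorphism $C\otimes_R\mathrm{Hom}_R(C,C^n)\cong C^n$ recovers $\sigma$ as an automorphism.

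To build $X$, set $\phi_0\colon C^{n_0}\to M$ a cover, $K_0=\ker\phi_0$, and inductively take $\phi_{i+1}\colon C^{n_{i+1}}\to K_i$ a cover, defining $\partial_{i+1}^X=\iota_i\phi_{i+1}$ with $\iota_i\colon K_i\hookrightarrow C^{n_i}$. Property (1) at stage $i$ is exactly the precover property of $\phi_{i+1}$: any $C\otimes_R Q\to C^{n_i}$ annihilated by the preceding differential (or by $\phi_0$ when $i=0$) factors through $K_i$, and hence through $\phi_{i+1}$. Property (1) forces $\mathrm{Hom}_R(C,X^+)$ to be exact (since $C\in\mathcal{P}_C$), and by construction $\mathrm{Hom}_R(C,\phi_{i+1})$ is the projective cover $\pi_{i+1}$ of $\mathrm{Hom}_R(C,K_i)=\ker\pi_i\subseteq\mathfrak{m}R^{n_i}$, so each $\mathrm{Hom}_R(C,\partial_{i+1}^X)$ has matrix entries in $\mathfrak{m}$. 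Under $C^n\cong C\otimes_R R^n$ the same matrix represents $\partial_{i+1}^X$, whence $\partial_{i+1}^X(C^{n_{i+1}})\subseteq\mathfrak{m}\,C^{n_i}$, and minimality of $X$ follows from the criterion \cite[Lemma 4.8]{w} recalled before the theorem.

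Suppose finally that $M$ admits a proper $\mathcal{P}_C$-resolution $Y$, so $Y^+$ is both exact and $\mathrm{Hom}_R(\mathcal{P}_C,-)$-exact. I construct chain maps $\sigma\colon X\to Y$ and $\tau\colon Y\to X$ both extending $\mathrm{id}_M$ by the usual inductive lifting, exploiting the exactness of $\mathrm{Hom}_R(X_i,Y^+)$ (respectively $\mathrm{Hom}_R(Y_i,X^+)$) at each stage, which holds because $X_i,Y_i\in\mathcal{P}_C$ and both $Y^+$ and $X^+$ are $\mathrm{Hom}_R(\mathcal{P}_C,-)$-exact. The same inductive machinery shows that any chain self-map of $X$ (respectively $Y$) extending $0$ on $M$ is null-homotopic, so $\tau\sigma\simeq\mathrm{id}_X$ and $\sigma\tau\simeq\mathrm{id}_Y$. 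Thus $X\simeq Y$ in the homotopy category; in particular $H_i(X)\cong H_i(Y)=0$ for $i\geq 1$ and $H_0(X)\cong M$ via the map induced by $\phi_0^X$, which is precisely the assertion that $X^+$ is exact. The hardest part will be organising the lifting and uniqueness arguments using only $\mathrm{Hom}_R(\mathcal{P}_C,-)$-exactness, since full exactness of $X^+$ is precisely the conclusion sought.
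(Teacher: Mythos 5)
Your construction is correct, and its core is the same as the paper's: transfer the projective cover $\pi\colon R^{n}\to\mathrm{Hom}_R(C,N)$ through the adjunction to obtain a $\mathcal P_C^f$--cover $C^{n}\to N$, iterate on kernels, get property (1) from the precover property, and identify $\mathrm{Hom}_R(C,X)$ with the minimal free resolution of $\mathrm{Hom}_R(C,M)$. You diverge, correctly, in two of the verifications. For minimality the paper takes $f\colon X\to X$ homotopic to $\mathrm{id}_X$, applies $\mathrm{Hom}_R(C,-)$, uses the Avramov--Martsinkovsky criterion on the minimal complex $\mathrm{Hom}_R(C,X)$, and transports the resulting isomorphism back through the natural isomorphism $X\cong C\otimes_R\mathrm{Hom}_R(C,X)$; you instead note that the differentials of $\mathrm{Hom}_R(C,X)$ have entries in $\mathfrak m$ (since each $\pi_{i+1}$ covers $\ker\pi_i\subseteq\mathfrak m R^{n_i}$), hence $\partial^X(X)\subseteq\mathfrak m X$, and quote White's criterion recalled before the theorem --- a slightly more computational but equally valid route. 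For the final assertion the paper just cites Takahashi--White (Corollary 2.3), whereas you reprove that special case by the relative comparison theorem: chain maps $X\to Y\to X$ over $\mathrm{id}_M$ and the null-homotopies exist because $X^+$ and $Y^+$ are $\mathrm{Hom}_R(\mathcal P_C,-)$--exact and all terms lie in $\mathcal P_C$, giving $H_i(X)\cong H_i(Y)$ compatibly with the augmentations; this makes the argument self-contained at the cost of length. Do keep the point you already flag: the homotopy equivalence must be taken over $M$, so that one gets that the augmentation $C^{n_0}\to M$ is surjective with kernel $\mathrm{im}\,\partial_1^X$ (i.e.\ exactness of $X^+$ at the augmentation spot), not merely an abstract isomorphism $H_0(X)\cong M$; with that in place your proof is complete.
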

\begin{proof}
Let $M$ be a finite $R$--module. Assume that $n_0=\nu (\mathrm{ Hom}_R(C, M))$ denotes
the number of a minimal set of generators of $\mathrm{ Hom}_R(C, M)$
and that $\alpha:R^{n_0}\longrightarrow  \mathrm{ Hom}_R(C, M)$ is the natural epimorphism.
As $\alpha$ is a $\mathcal P^f$--cover of $\mathrm{ Hom}_R(C, M)$, the natural map
$\beta=C\otimes_RR^{n_0}\stackrel{C\otimes_R\alpha}{-\hspace{-0.07cm}-\hspace{-0.17cm}\longrightarrow}
C\otimes_R\mathrm{ Hom}_R(C, M) \stackrel{\nu_M}{\longrightarrow}M$
is a $\mathcal P_C^f$--cover of $M$.
Set $M_1=\mathrm{ Ker} \beta$ and $n_1=\nu(\mathrm{ Hom}_R(C, M_1))$. Thus there is a $\mathcal P_C^f$--cover
$\beta_1: C\otimes_RR^{n_1}\longrightarrow M_1$.
Proceeding in this way one obtains a complex
$$X=\cdots \stackrel{\partial_2=\epsilon_2 \beta_2}{-\hspace{-0.07cm}-\hspace{-0.17cm}\longrightarrow} C\otimes_RR^{n_1}\stackrel{\partial_1=\epsilon_1 \beta_1}{-\hspace{-0.07cm}-\hspace{-0.17cm}\longrightarrow} C\otimes_RR^{n_0}\longrightarrow 0,$$
where $\epsilon_i: M_i \rightarrow C\otimes_RR^{n_{i-1}}$ is the inclusion map for all $i\geqslant 1$.
As the maps in $X$ are obtained by $\mathcal P_C^f$--covers, the complex $X^+$ is $\mathrm{ Hom}_R(\mathcal P_C, -)$--exact.
It is easy to see that $\mathrm{ Hom}_R(C, X)$ is minimal free resolution of $\mathrm{ Hom}_R(C, M)$.
Now we show that $X$ is a minimal complex. Let $f: X\rightarrow X$ be a morphism which is homotopic to $\mathrm{id}_X$.
It is easy to see that the morphism $\mathrm{Hom}_R(C, f)$ is homotopic to $\mathrm{id}_{\mathrm{Hom}_R(C, X)}$.
As the complex $\mathrm{ Hom}_R(C, X)$ is minimal, by \cite[Proposition 1.7]{am}, the morphism $\mathrm{Hom}_R(C, f)$ is an isomorphism.
The commutative diagram
$$\begin{array}{ccc}
 X & \stackrel{f}{{-\hspace{-0.2cm}-\hspace{-0.2cm}-\hspace{-0.2cm}-\hspace{-0.2cm}-\hspace{-0.2cm}
 -\hspace{-0.2cm}-\hspace{-0.2cm}-\hspace{-0.2cm}-\hspace{-0.2cm}-\hspace{-0.2cm}-\hspace{-0.2cm}-\hspace{-0.2cm}
 \longrightarrow}} & X  \\
  \begin{array}{ll} \ \ {\Big\downarrow} \cong \end{array} &     &\begin{array}{ll} \ \ {\Big\downarrow} \cong \end{array}  \\
 C\otimes_R \mathrm{ Hom}_R(C, X)
 & \stackrel{\cong}{\underset{C\otimes_R \mathrm{ Hom}_R(C, f)}{-\hspace{-0.2cm}-\hspace{-0.2cm}-\hspace{-0.2cm}
 -\hspace{-0.2cm}-\hspace{-0.2cm}-\hspace{-0.2cm}-\hspace{-0.2cm}-\hspace{-0.2cm}-\hspace{-0.2cm}\longrightarrow}} &
  C\otimes_R \mathrm{ Hom}_R(C, X), \\
\end{array}$$
with vertical natural isomorphisms, implies that $f$ is an
isomorphism. Therefore, by \cite[Proposition 1.7]{am}, $X$ is
minimal. If $M$ admits a proper $\mathcal P_C$--resolution, then by
\cite[Corollary 2.3]{tw}, $X^+$ is exact.
\end{proof}
The proof of the next lemma is similar to \cite[Corollary 2.3]{tw}.

\begin{lem}\label{g}
Let $R$ be a commutative noetherian ring and let $M$ be a finite $R$--module. Assume that $C$ is a semidualizing $R$--module.
The following are equivalent.
\begin{itemize}
\item[(i)] $M$ admits a proper $\mathcal P^f_C$--coresolution.
\item[(ii)]  Every $\mathrm{ Hom}_R(-, \mathcal P^f_C)$--exact complex of the form
$$0\longrightarrow M \longrightarrow C\otimes_RQ_0 \longrightarrow C\otimes_RQ_{-1}\longrightarrow \cdots$$
is exact, where $Q_i$ is an object of $\mathcal P^f$ for all $i\leqslant0$.
\item[(iii)] The natural homomorphism $M \longrightarrow \mathrm{ Hom}_R(\mathrm{ Hom}_R(M, C), C)$ is an isomorphism and $\mathrm{ Ext}^{\geqslant1}_R(\mathrm{ Hom}_R(M, C), C)=0$.
\end{itemize}
\end{lem}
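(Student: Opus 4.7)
The plan is to exploit the duality between $\mathcal{P}^f$ and $\mathcal{P}^f_C$ induced by the functors $\mathrm{Hom}_R(-,C)$ and $C\otimes_R-$. Writing $Q^\ast=\mathrm{Hom}_R(Q,R)$, I will use the natural isomorphisms $\mathrm{Hom}_R(C\otimes_R Q,C)\cong Q^\ast$, $\mathrm{Hom}_R(Q^\ast,C)\cong C\otimes_R Q$, and $\mathrm{Hom}_R(C\otimes_R Q,C\otimes_R P)\cong\mathrm{Hom}_R(Q,P)$ for $Q,P\in\mathcal{P}^f$ (all coming from $R\cong\mathrm{Hom}_R(C,C)$ together with additivity), as well as the tensor-evaluation isomorphism $\mathrm{Hom}_R(M,C)\otimes_R P\cong\mathrm{Hom}_R(M,C\otimes_R P)$ for $P\in\mathcal{P}^f$. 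Throughout, set $N:=\mathrm{Hom}_R(M,C)$ and let $\phi\colon M\rightarrow\mathrm{Hom}_R(N,C)$ denote the biduality map.

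For (i)$\Rightarrow$(iii), I apply $\mathrm{Hom}_R(-,C)$ to a proper $\mathcal{P}^f_C$--coresolution of $M$; since $C\in\mathcal{P}^f_C$, properness preserves exactness and the identifications above produce a finite projective resolution $\cdots\rightarrow Q_{-1}^\ast\rightarrow Q_0^\ast\rightarrow N\rightarrow 0$. A second application of $\mathrm{Hom}_R(-,C)$, combined with $\mathrm{Hom}_R(Q_i^\ast,C)\cong C\otimes_R Q_i$ and $\phi$, reproduces the original coresolution, whose exactness then forces $\mathrm{Ext}_R^{\geqslant 1}(N,C)=0$ and $\phi$ to be an isomorphism. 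Conversely, for (iii)$\Rightarrow$(i), I start from a finite projective resolution $\cdots\rightarrow F_1\rightarrow F_0\rightarrow N\rightarrow 0$, apply $\mathrm{Hom}_R(-,C)$, and use (iii) together with the identifications to produce an exact sequence $0\rightarrow M\rightarrow C\otimes_R F_0^\ast\rightarrow C\otimes_R F_1^\ast\rightarrow\cdots$. To verify properness, I apply $\mathrm{Hom}_R(-,C\otimes_R P)$ for $P\in\mathcal{P}^f$: tensor evaluation converts the test complex into the original projective resolution of $N$ tensored with the flat module $P$, which is exact.

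For (iii)$\Rightarrow$(ii), I repeat the (i)$\Rightarrow$(iii) argument on any $\mathrm{Hom}_R(-,\mathcal{P}^f_C)$--exact complex $0\rightarrow M\rightarrow C\otimes_R Q_0\rightarrow C\otimes_R Q_{-1}\rightarrow\cdots$: the first $\mathrm{Hom}_R(-,C)$ extracts a finite projective resolution of $N$, and the second, using the $\mathrm{Ext}$--vanishing and $\phi$ from (iii), yields an exact complex naturally isomorphic to the original. For (ii)$\Rightarrow$(iii), the plan is to take any finite projective resolution $\cdots\rightarrow F_1\rightarrow F_0\rightarrow N\rightarrow 0$, apply $\mathrm{Hom}_R(-,C)$, and precompose with $\phi$ to form a complex $X\colon 0\rightarrow M\rightarrow C\otimes_R F_0^\ast\rightarrow C\otimes_R F_1^\ast\rightarrow\cdots$. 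A tensor-evaluation computation shows that $\mathrm{Hom}_R(X,C\otimes_R P)$ is isomorphic to the projective resolution of $N$ tensored with the flat module $P$, hence exact, so $X$ is $\mathrm{Hom}_R(-,\mathcal{P}^f_C)$--exact; hypothesis (ii) then forces $X$ itself to be exact. Exactness of $X$ at $M$ and at $C\otimes_R F_0^\ast$ forces $\phi$ to be an isomorphism, and exactness at $C\otimes_R F_i^\ast$ for $i\geqslant 1$ is precisely $\mathrm{Ext}_R^i(N,C)=0$.

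The main technical hurdle will be naturality: one has to confirm that the various isomorphisms (biduality, $\mathrm{Hom}_R(C\otimes_R Q,C)\cong Q^\ast$, tensor evaluation, and so on) commute with all differentials, so that after two applications of $\mathrm{Hom}_R(-,C)$ the resulting complex is genuinely isomorphic to---rather than merely termwise isomorphic to---the original. Once naturality is in hand, the proof is essentially a dualization of \cite[Corollary 2.3]{tw}.
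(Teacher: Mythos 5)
Your proposal is correct and is exactly the argument the paper intends: the paper gives no independent proof but states that the lemma is proved by dualizing \cite[Corollary 2.3]{tw}, i.e.\ by the $\mathrm{Hom}_R(-,C)$--duality between $\mathcal P^f$ and $\mathcal P^f_C$ (adjunction/tensor-evaluation identifications, biduality, and flatness of projectives), which is precisely what you carry out. The naturality checks you flag are routine, and your remaining care point (finite versus arbitrary projective test modules in the properness condition) causes no trouble since $M$, $C$ and the $F_i$ are finite, so the relevant evaluation maps remain isomorphisms.
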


\begin{prop}\label{F}
Assume that $R$ is a commutative noetherian local ring and that $C$ is a semidualizing $R$--module. Then $\mathcal P_C^f$ is enveloping in the category of finite $R$--modules. For any finite $R$--module $M$, there is a complex
$Y=0 \longrightarrow C^{m_0}\longrightarrow C^{m_1}\longrightarrow \cdots$ with the following properties.

$\mathrm{ (1)}$ $ ^+Y=0\longrightarrow M\longrightarrow  C^{m_0}\longrightarrow  C^{m_1} \longrightarrow \cdots $ is $\mathrm{ Hom}_R(-, \mathcal P_C)$--exact.

$\mathrm{ (2)}$ $Y$ is a minimal complex.

If $M$ admits a proper $\mathcal P^f_C$--coresolution, then $^+Y$ is exact and so $Y$ is a minimal proper $\mathcal P_C$--coresolution of $M$.
\end{prop}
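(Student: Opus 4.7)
The plan is to dualize the argument in Theorem~\ref{E}, replacing the functor $\mathrm{Hom}_R(C,-)$ and its counit $C\otimes_R\mathrm{Hom}_R(C,-)\to\mathrm{id}$ by the functor $\mathrm{Hom}_R(-,C)$ and the biduality map $\mathrm{id}\to\mathrm{Hom}_R(\mathrm{Hom}_R(-,C),C)$, which is a natural isomorphism on every object of $\mathcal P_C^f$. For a finite $R$-module $M$, set $m_0=\nu(\mathrm{Hom}_R(M,C))$, fix a minimal system of generators $\varphi_1,\ldots,\varphi_{m_0}$ of $\mathrm{Hom}_R(M,C)$, and let $\alpha_0\colon R^{m_0}\twoheadrightarrow\mathrm{Hom}_R(M,C)$, $e_i\mapsto\varphi_i$, be the associated minimal surjection. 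Define $\beta_0\colon M\to C^{m_0}$ by $x\mapsto(\varphi_1(x),\ldots,\varphi_{m_0}(x))$; a direct computation shows $\mathrm{Hom}_R(\beta_0,C)=\alpha_0$.

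To check the $\mathcal P_C^f$-preenvelope property, observe that any $\gamma\colon M\to C^m$ has components in $\mathrm{Hom}_R(M,C)$, hence expressible as $R$-linear combinations of the $\varphi_i$, and the coefficient matrix defines a morphism $C^{m_0}\to C^m$ through which $\gamma$ factors. For minimality, given $f\colon C^{m_0}\to C^{m_0}$ with $f\beta_0=\beta_0$, apply $\mathrm{Hom}_R(-,C)$ to obtain $\tilde f\colon R^{m_0}\to R^{m_0}$ with $\alpha_0\tilde f=\alpha_0$; Nakayama's lemma forces $\tilde f$ to be an isomorphism, and the biduality isomorphism $C^{m_0}\cong\mathrm{Hom}_R(\mathrm{Hom}_R(C^{m_0},C),C)$ transfers this back to $f$. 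Thus $\beta_0$ is a $\mathcal P_C^f$-envelope, establishing that $\mathcal P_C^f$ is enveloping. Iterating, with $M_{i+1}=\mathrm{coker}(\beta_i)$ and $\partial_i\colon C^{m_i}\twoheadrightarrow M_{i+1}\stackrel{\beta_{i+1}}{\longrightarrow}C^{m_{i+1}}$, yields the complex $Y$; property (1) at each position is simply the preenvelope property of the corresponding $\beta_i$.

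The main obstacle is establishing (2). I would argue via the dual complex $Y^*=\mathrm{Hom}_R(Y,C)$, a complex of finite free $R$-modules whose differential $\partial_i^*\colon R^{m_{i+1}}\to R^{m_i}$ factors as $R^{m_{i+1}}\stackrel{\alpha_{i+1}}{\twoheadrightarrow}\mathrm{Hom}_R(M_{i+1},C)\hookrightarrow R^{m_i}$, the second map induced by the surjection $C^{m_i}\twoheadrightarrow M_{i+1}$. The key identification is that this inclusion realizes $\mathrm{Hom}_R(M_{i+1},C)$ as $\ker\alpha_i\subseteq R^{m_i}$: a homomorphism $\phi\in R^{m_i}=\mathrm{Hom}_R(C^{m_i},C)$ satisfies $\alpha_i(\phi)=\phi\circ\beta_i=0$ iff $\phi$ vanishes on $\mathrm{image}(\beta_i)$ iff $\phi$ factors through $M_{i+1}$. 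Since $\alpha_i$ is a minimal surjection, $\ker\alpha_i\subseteq\mathfrak m R^{m_i}$, so $Y^*$ is a minimal complex of finite free modules in the sense of \cite[Proposition~1.7]{am}. Minimality of $Y$ then follows by transferring through the biduality diagram exactly as in the proof of Theorem~\ref{E}. Finally, if $M$ admits a proper $\mathcal P_C^f$-coresolution, Lemma~\ref{g} applies to the $\mathrm{Hom}_R(-,\mathcal P_C^f)$-exact complex $^+Y$ and yields that $^+Y$ is exact, completing the proof.
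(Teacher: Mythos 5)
Your proposal is correct and follows essentially the same route as the paper: your $\beta_0$ is exactly the paper's envelope $\gamma=\mathrm{Hom}_R(\alpha,C)\circ\delta_M$ built from the minimal free cover of $\mathrm{Hom}_R(M,C)$, the iteration via cokernels is the same, minimality is obtained in both cases by noting $\mathrm{Hom}_R(Y,C)$ is a minimal complex of finite free modules and transferring back through the biduality isomorphisms as in Theorem~\ref{E}, and the final exactness claim is Lemma~\ref{g} in both. You merely supply details (the envelope verification via Nakayama and the identification $\ker\alpha_i\cong\mathrm{Hom}_R(M_{i+1},C)\subseteq\mathfrak m R^{m_i}$) that the paper leaves implicit.
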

\begin{proof}
Let $M$ be a finite $R$--module. Assume that $m_0=\nu (\mathrm{
Hom}_R(M, C))$ denotes the number of a minimal set of generators of
$\mathrm{ Hom}_R(M, C)$ and that $\alpha:R^{m_0}\longrightarrow
\mathrm{ Hom}_R(M, C)$ is the natural $\mathcal P^f$--cover of
$\mathrm{ Hom}_R(M, C)$. It follows that
$\gamma=M\stackrel{\delta_M}{\longrightarrow}\mathrm{
Hom}_R(\mathrm{ Hom}_R(M, C), C)\stackrel{\mathrm{ Hom}_R(\alpha,
C)}{-\hspace{-0.07cm}-\hspace{-0.17cm}\longrightarrow}\mathrm{
Hom}_R(R^{m_0}, C)$ is a $\mathcal P_C^f$--envelope of $M$. Set
$M_{-1}=\mathrm{ Coker} \gamma$ and $m_1=\nu (\mathrm{
Hom}_R(M_{-1}, C))$. As mentioned, there is a $\mathcal
P_C^f$--envelope $\gamma_1:M_{-1}\longrightarrow\mathrm{
Hom}_R(R^{m_1}, C)$. Proceeding in this way one obtains a complex
$Y=0 \longrightarrow \mathrm{ Hom}_R(R^{m_0},
C)\stackrel{\partial_0=\gamma_1\pi_1}{-\hspace{-0.07cm}-\hspace{-0.17cm}\longrightarrow}
\mathrm{ Hom}_R(R^{m_1},
C)\stackrel{\partial_{-1}=\gamma_2\pi_2}{-\hspace{-0.07cm}-\hspace{-0.17cm}\longrightarrow}
\cdots$, where $\pi_i$ is the natural epimorphism for all
$i\geqslant 1$. Since the maps in $Y$ are obtained by $\mathcal
P_C^f$--envelopes, the complex $ ^+Y$ is  $\mathrm{ Hom}_R(-,
\mathcal P_C)$--exact. It is easy to see that $\mathrm{ Hom}_R(Y, C)$
is minimal free resolution of $\mathrm{ Hom}_R(M, C)$.
Similar to the proof of Theorem~\ref{E}, we find that $Y$ is a minimal complex.
If $M$ admits a proper $\mathcal P^f_C$--coresolution,
then, by Lemma \ref{g}, $^+Y$ is exact.
\end{proof}

In the following example we find an $R$--module $M$ with $\mathcal
P_C$--$\mathrm{ pd}(M)=\infty$ which admits a minimal proper
$\mathcal P_C$--resolution. This example shows that a commutative
noetherian local ring which admits an exact zero-divisor is not a
strongly regular ring.

\begin{exmp}\label{ex3}
Let $R$ be a commutative noetherian local ring, $C$ a semidualizing $R$--module. 
Assume that $x,  y$ form a pair of exact zero-divisors on both $R$ and $C$ (e.g. see \cite[Example 3.2]{ad}).
Then $\mathcal P_C$--$\mathrm{ pd}(C/xC)=\mathrm{ pd}(R/xR)=\infty$.  The complex
$$T_\bullet=\cdots \stackrel{x}{\longrightarrow}C \stackrel{y}{\longrightarrow}C\stackrel{x}{\longrightarrow}C\longrightarrow 0\ (\ \mathrm{ resp.}\ L_\bullet= 0\longrightarrow C \stackrel{x}{\longrightarrow}C \stackrel{y}{\longrightarrow}C\stackrel{x}{\longrightarrow}\cdots )$$
is a minimal $\mathcal P_C$--resolution (resp. $\mathcal P_C$--coresolution) of $C/{xC}$.
By \cite[Proposition 3.4]{ad}, $C/xC$ is a semidualizing $R/xR$--module. By \cite[Proposition 2.13]{dg}, there are isomorphisms
$$\mathrm{ Hom}_R(C, C/{xC})\cong \mathrm{ Hom}_{R/xR}(C/{xC}, C/{xC})\cong R/xR,$$
$$\mathrm{ Hom}_R(C/{xC}, C)\cong \mathrm{ Hom}_{R/xR}(C/{xC}, C/{xC})\cong R/xR.$$
Applying $\mathrm{ Hom}_R(C, -)$ and $\mathrm{ Hom}_R(-, C)$ on the above complexes, respectively, would result the isomorphisms $\mathrm{ Hom}_R(C, T_\bullet^+)\cong F_\bullet^+$ and $\mathrm{ Hom}_R( ^+L_\bullet, C)\cong F_\bullet^+$, where
$F_\bullet^+$ is the exact complex
$\cdots \stackrel{y}{\longrightarrow}R\stackrel{x}{\longrightarrow}R \stackrel{y}{\longrightarrow}R\stackrel{x}{\longrightarrow}R\longrightarrow R/xR\longrightarrow 0.$
Therefore $T_\bullet$ (resp. $L_\bullet$) is a minimal proper $\mathcal P_C$--resolution (resp. $\mathcal P_C$--coresolution) of $C/{xC}$.

For each $n$, one obtains a $C$--perfect complex of length $n$ as
$$T_\bullet^{(n)}= 0 \longrightarrow C {\longrightarrow}C {\longrightarrow}\cdots\stackrel{x}{\longrightarrow}C\stackrel{y}{\longrightarrow}C\stackrel{x}{\longrightarrow}C \longrightarrow 0,$$
where $T_i^{(n)}= T_i$ for all $0\leq i \leq n$ and $T_i^{(n)}=0$ otherwise.
Note that the induced map $\bar{d}_i:T_i^{(n)}/{\mathrm{ Ker}\, d_i}\rightarrow T_{i-1}^{(n)}$ is injective,
where $\mathrm{ Ker}\, d_i$ is equal to $yC$ or $xC$. As $C$ is indecomposable $R$--module, $T_\bullet^{(n)}$ is indecomposable which has a similar proof to \cite[Proposition 1.5]{bf}.
\end{exmp}

\section*{\bf Acknowledgment}
The authors are grateful to the referee for his/her careful reading
of the paper and valuable comments. The second author was supported
in part by a grant from IPM (No.93130110).




\begin{thebibliography}{10}

\bibitem {ad} E. Amanzadeh and M. T. Dibaei,  Auslander class, $\mbox{G}_C$ and $C$--projective modules modulo exact zero-divisors,
{\it Comm. Algebra}, to appear.


\bibitem {am} L. L. Avramov and A. Martsinkovsky,  Absolute, relative, and Tate cohomology of modules of finite Gorenstein dimension,
{\it Proc. London Math. Soc.}  {\bf 85} (2002), no.3, 393--440.


\bibitem {bf} R-O. Buchweitz and H. Flenner,  Strong global dimension of commutative rings and schemes, {\it J. Algebra} {\bf 422} (2015), 741--751.


\bibitem {cf} L. W. Christensen and H. B. Foxby, Hyperhomological algebra with applications to commutative rings, http://www.math.ttu.edu/\~lchriste/download/918-final.pdf

\bibitem{dg} M. T. Dibaei and M. Gheibi,   Sequence of exact zero--divizors,  arXiv:1112.2353v3 (2012).


\bibitem {ej} E. E. Enochs and O. M. G. Jenda,  Relative homological algebra, Walter de Gruyter. Berlin. New York 2000.


\bibitem {gm} S. I. Gelfand and Y. I. Manin,  Methods of homological algebra, Springer Monographs in Mathematics, 1988.


\bibitem {hj} H. Holm and P. J{\o}rgensen,  Semi-dualizing modules and related Gorenstein homological dimensions, {\it J. Pure Appl. Algebra}
{\bf 205} (2006), 423--445.

\bibitem {hw}  H. Holm and D. White,   Foxby equvalence over associative rings, {\it J. Math. Kyoto Univ.} {\bf 47} (2007), no. 4, 781--808.


\bibitem {r} J. J. Rotman,   An introduction to homological algebra,  Springer Universitext, Second Edition, 2009.


\bibitem{s-w} S. Sather-Wagstaff,   Semidualizing modules, http://www.ndsu.edu/pubweb/\~ssatherw/DOCS/sdm.pdf

\bibitem {wsw1} S. Sather-Wagstaff, T. Sharif and D. White,  Stability of Gorenstein categories, {\it J. Lond. Math. Soc.}
{\bf 77} (2008), no. 2, 481--502.

\bibitem {tw} R. Takahashi and D. White,   Homological aspects of semidualizing modules, {\it Math. Scand.} {\bf 106} (2010), 5--22.


\bibitem {w} D. White,  Gorenstein projective dimension with respect to a semidualizing module, {\it J. Commut. Algebra}
{\bf 2} (2010), no. 1, 111--137.

\end{thebibliography}
\end{document}